\numberwithin{equation}{section}
\def\vandaag{\number\day\space\ifcase\month\or
 januari\or februari\or  maart\or  april\or mei\or juni\or  juli\or
 augustus\or  september\or  oktober\or november\or  december\or\fi,
\number\year}
\def\today{\ifcase\month\or
 Jan\or Febr\or  Mar\or  Apr\or May\or Jun\or  Jul\or
 Aug\or  Sep\or  Oct\or Nov\or  Dec\or\fi
 \space\number\day, \number\year}
\begin{document}
%%%%%%%%%

%%%%%%%%%%%%%%%%%%%%%%%%%%%%%%%%%%%%%%%%%%%%%%%%%%%%%%%%%%%%%%
\newtheorem{theorem}{Theorem}[section]
\newtheorem{lemma}[theorem]{Lemma}
\newtheorem{proposition}[theorem]{Proposition}
\newtheorem{corollary}[theorem]{Corollary}
\newtheorem{conjecture}[theorem]{Conjecture}
\newtheorem{definition-lemma}[theorem]{Definition-Lemma}
\newtheorem{claim}[theorem]{Claim}
\theoremstyle{definition}
\newtheorem{definition}[theorem]{\bf Definition}
\newtheorem{example}[theorem]{\bf Example}
\newtheorem{none}[theorem]{}
\theoremstyle{remark}
\newtheorem{remark}[theorem]{\bf Remark}
\newtheorem{remarks}[theorem]{\bf Remarks}
\newtheorem{question}[theorem]{\bf Question}
\newtheorem{problem}[theorem]{\bf Problem}
%%%%%%%%%%%%%%%%%%%%%%%%%%%%%%%%%%%%%%%%%%%%%%%%%%%%%%%%%%%%%%%%%%%
\newcommand{\CC}{\mathbb C}
\newcommand{\DD}{\mathbb D}
\newcommand{\EE}{\mathbb E}
\newcommand{\FF}{\mathbb F}
\newcommand{\GG}{\mathbb G}
\newcommand{\HH}{\mathbb H}
\newcommand{\LL}{\mathbb L}
\newcommand{\PP}{\mathbb P}
\newcommand{\QQ}{\mathbb Q}
\newcommand{\RR}{\mathbb R}
\newcommand{\VV}{\mathbb V}
\newcommand{\WW}{\mathbb W}
\newcommand{\ZZ}{\mathbb Z}
%%%%%%%%%%%%%%%%%%%%%%%%%%%%%%%%%%%%%%%%%%%%%%%%%%%%%%%%%%%%%5
\newcommand{\A}{\mathcal A}
\newcommand{\R}{\mathcal R}
\newcommand{\cO}{\mathcal O}
\newcommand{\cX}{\mathcal X}

\newcommand{\cI}{\mathcal I}
\newcommand{\sA}{{\mathcal A}^{\ast}}
\newcommand{\bM}{\overline{\mathcal M}}
\newcommand{\bA}{\overline{\mathcal A}}
\newcommand{\sF}{\mathscr {F}}
\newcommand{\cH}{\mathcal {H}}
\newcommand{\sH}{\mathscr {H}}
\newcommand{\sL}{\mathscr {L}}
\newcommand{\sZ}{\mathscr {Z}}

%%%%%%%%%%%%%%%%%%%%%%%%%%%%%%%
\newcommand\Sp{\operatorname{Sp}}
\newcommand\ch{\operatorname{ch}}
\newcommand\Fr{\operatorname{Fr}}
\newcommand\spec{\operatorname{Spec}}
%%%%%%%%%%%%%%%%%%%%%%%%%%%%%%%
\newcommand\an{\mathrm{an}}
\newcommand\gp{\mathrm{gp}}
%%%%%%%%%%%
\title[]{The ring of modular forms of degree two \\
 in characteristic three}

\author{Gerard van der Geer}
\address{Korteweg-de Vries Instituut, Universiteit van Amsterdam, Science 
Park 904, 1098 XH Amsterdam, The Netherlands, and
Universit\'e du Luxembourg, Unit\'e de Recherche en Math\'ematiques,
L-4364 Esch-sur-Alzette, Luxembourg.}
\email{g.b.m.vandergeer@uva.nl}

\subjclass{11F03,14J15, 14G35, 11G18}

\maketitle
%\centerline{\tt \today}
%%%%%%%%%%%%%%%%%%%%%%%%%%%%%%%%%%%%%%%%
\begin{abstract}
We determine the structure of the ring of Siegel modular forms 
of degree $2$ in characteristic $3$.
\end{abstract}

%%%%%%%%%%%%%%%%%%%%%%%%%%%%%%%%%%%%%%%%

\begin{section}{Introduction}
Let ${\A}_g$ be the moduli space of principally polarized abelian varieties of dimension $g$.
It is a Deligne-Mumford stack over ${\ZZ}$. 
It carries a natural vector bundle of rank~$g$, the Hodge bundle
${\EE}_g$. We write $L$ for its determinant line bundle. The vector bundle ${\EE}_g$ extends in a
natural way over any compactification $\tilde{\A}_g$ of Faltings-Chai type and we will
denote the extension of ${\EE}_g$ and $L$ again by the same symbols. 
Sections of $L^{\otimes k}$ over $\tilde{\A}_g$ are called modular forms of weight $k$.
It is known that for $g\geq 2$ any section of $L^k$ over ${\A}_g$ extends to a section of
$L^k$ over $\tilde{\A}_g$, a fact usually 
referred to as the Koecher principle, see \cite[Prop.\ 1.5, p.\ 140]{F-C}. 

If ${\FF}={\ZZ}$ or ${\ZZ}_p$ or a field one has the graded ring
$$
{\mathcal R}_g({\FF})= \oplus_k H^0(\tilde{\A}_g \otimes {\FF},L^k)\, .
$$
It is known by \cite{F-C} that it is a finitely generated ${\FF}$-algebra. 

In the case of ${\FF}={\CC}$ the ring ${\mathcal R}_g({\CC})$ is the ring
of scalar-valued Siegel modular forms of degree~$g$. It is well-known known that
${\mathcal R}_1({\CC})={\CC}[E_4,E_6]$ is freely generated over ${\CC}$ by the Eisenstein series $E_4$
and $E_6$ of weights $4$ and $6$. In the 1960s 
Igusa \cite{Igusa} determined the structure of ${\mathcal R}_2({\CC})$:
$$
{\mathcal R}_2({\CC})={\CC}[\psi_4,\psi_6,\chi_{10},\chi_{12}, \chi_{35}]/(\chi_{35}^2-P),
$$
where the indices of the generators indicate the weights and 
$P$ is a polynomial in $\psi_4,\psi_6,\chi_{10}$ and $\chi_{12}$.
Moreover, the ideal of cusp forms is generated by $\chi_{10},\chi_{12}$ and $\chi_{35}$.
For $g=3$, Tsuyumine showed in \cite{Tsuyumine} that ${\mathcal R}_3({\CC})$ is generated by $34$ 
elements; recently the number of generators
was reduced to $19$ by Lercier and Ritzenthaler \cite{L-R}.

For ${\FF}={\FF}_p$, a finite field with $p$ elements, the ring $R_1({\FF}_p)$
was described by Deligne \cite{Deligne1975}. Besides giving the structure
of the ring over ${\ZZ}$
$$
\mathcal{R}_1({\ZZ})={\ZZ}[c_4,c_6,\Delta]/(c_4^3-c_6^2-1728 \, \Delta) \, ,
$$
he showed that
$$
{\mathcal R}_1({\FF}_2)={\FF}_2[a_1,\Delta] \quad \text{\rm and} \quad 
{\mathcal R}_1({\FF}_3)={\FF}_3[b_2,\Delta]\, ,
$$
where $\Delta$ is of weight $12$ and $a_1$ (resp $b_2$) is of weight $1$ (resp.\ $2$).
For $p\geq 5$ we have ${\mathcal R}_1({\FF}_p)={\FF}_p[c_4,c_6]$.

For $g=2$,  Igusa determined in \cite{Igusa1979} 
also the ring of modular forms over ${\ZZ}$; it is generated by elements of weight
$$
4, 6, 10, 12, 12, 16, 18, 24, 28, 30, 35, 36, 40, 42, 48\, .
$$

For finite fields the structure of ${\mathcal R}_2({\FF}_p)$ is known for $p\geq 5$.
For this we refer
to Ichikawa's paper \cite{Ichikawa}.
For $p\geq 5$ the ring is just as in characteristic zero generated by modular forms $\psi_4$,
$\psi_6$, $\chi_{10}$, $\chi_{12}$ and $\chi_{35}$ with $\chi_{35}$ satisfying a relation
$\chi_{35}^2=P(\psi_4,\psi_6,\chi_{10},\chi_{12})$. Moreover for $p\geq 5$ the reduction map 
${\mathcal R}_2({\ZZ}_p) \to {\mathcal R}_2({\FF}_p)$ is surjective. 
Nagaoka studied the image of the reduction map in \cite{N}, see also \cite{B-N}.

\bigskip

In this paper we consider the case $p=3$ and determine the structure of ${\mathcal R}_2({\FF}_3)$.
We use the close connection between the moduli space ${\A}_2$ and the moduli space
${\mathcal M}_2$ of curves of genus $2$ via the Torelli map ${\mathcal M}_2 \hookrightarrow {\A}_2$
and the description of ${\mathcal M}_2$  as a quotient stack 
for the action of ${\rm GL}(2)$ on the space of binary sextics.
In that way invariant theory can be used to construct modular forms.
The relation between invariants and modular forms was already exploited by Igusa in
\cite{Igusa}, but he used theta functions and Thomae's formula to relate these to cross ratios
of the zeros of a binary sextic. 
Here we use not only invariants but also covariants giving vector-valued modular forms as
introduced in \cite{CFG-MathAnn} to analyze the regularity of scalar-valued modular forms. 

Our result is:

\begin{theorem}
The subring ${\mathcal R}_2^{\rm ev}({\FF}_3)$ of modular forms of even weight
 is generated by forms of weights $2,10,12,14$ and $36$ and has the form
$$
{\mathcal R}_2^{\rm ev}({\FF}_3)= {\FF}_3[\psi_2, \chi_{10}, \psi_{12}, \chi_{14},\chi_{36}]/J
$$
with $J$ the ideal generated by the relation 
$$
\psi_2^3\chi_{36}-\chi_{10}^3\psi_{12}-\psi_2^2\chi_{10}\chi_{14}^2+\chi_{14}^3\, .
$$
Moreover, ${\mathcal R}_2({\FF}_3)={\mathcal R}_2^{\rm ev}({\FF}_3)[\chi_{35}]/(\chi_{35}^2-P)$
with $P$ a polynomial in $\psi_2$, $\chi_{10}$, $\psi_{12},\chi_{14}$ and $\chi_{36}$.
The ideal of cusp forms is generated by $\chi_{10}, \chi_{14},\chi_{35}, \chi_{36}$.
\end{theorem}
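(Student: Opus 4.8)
The plan is to exploit the Torelli morphism $j\colon\mathcal{M}_2\hookrightarrow{\A}_2$, which in characteristic $3$ is an open immersion of Deligne--Mumford stacks with image the complement of the divisor ${\A}_{1,1}\subset{\A}_2$ of products of two elliptic curves, together with the presentation $\mathcal{M}_2\cong[V^{\circ}/\mathrm{GL}_2]$, where $V\cong\mathrm{Sym}^6$ (up to a twist by $\det$) is the space of binary sextics and $V^{\circ}$ is the open subset on which the discriminant is nonzero. Under this presentation scalar-valued modular forms on $\mathcal{M}_2\otimes{\FF}_3$ are the invariants of binary sextics over ${\FF}_3$, an invariant of degree $d$ in the coefficients giving a form of weight $d$; more precisely $\bigoplus_k H^0(\mathcal{M}_2\otimes{\FF}_3,L^k)=I_{\bullet}[\chi_{10}^{-1}]$, where $I_{\bullet}$ is the graded ring of invariants and $\chi_{10}$ is, up to a scalar, the discriminant, an invariant of degree $10$. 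Covariants of binary sextics similarly yield vector-valued Siegel modular forms on $\mathcal{M}_2$, as developed in \cite{CFG-MathAnn}. The first step is therefore to work out the invariant theory of binary sextics in characteristic $3$; since $p=3\le6$ this is a genuinely modular situation, different from the classical one, and here one finds the weight-$2$ Hasse invariant $\psi_2$ of ${\A}_2\otimes{\FF}_3$ --- computed from the Hasse--Witt matrix, of degree $2$ in the coefficients --- as well as forms $\psi_{12}$ and $\chi_{14}$, the weight $14$ not occurring among the weights of Igusa's generators over ${\ZZ}$.

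By the Koecher principle $\mathcal{R}_2({\FF}_3)=\bigoplus_k H^0({\A}_2\otimes{\FF}_3,L^k)$, and inside $I_{\bullet}[\chi_{10}^{-1}]$ this is exactly the subring of forms that are regular along ${\A}_{1,1}$. Since the generic automorphism group along ${\A}_{1,1}$ acts on the normal direction by $-1$, the form $\chi_{10}$ vanishes along ${\A}_{1,1}$ to order exactly $2$, so the ring is determined once one knows, for every invariant, its order of vanishing along ${\A}_{1,1}$. To compute these I would restrict to ${\A}_{1,1}\cong\mathrm{Sym}^2({\A}_1\otimes{\FF}_3)$ and use Deligne's description $\mathcal{R}_1({\FF}_3)={\FF}_3[b_2,\Delta]$, together with the identification of the conormal bundle of ${\A}_{1,1}$ in ${\A}_2$ with ${\rm pr}_1^*{\EE}_1\otimes{\rm pr}_2^*{\EE}_2$: the successive normal derivatives along ${\A}_{1,1}$ of a scalar modular form are restrictions of vector-valued modular forms, which the covariant dictionary renders explicit, so the filtration of $I_{\bullet}$ by order along ${\A}_{1,1}$ can be written down completely. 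Carrying this out exhibits $\psi_2,\chi_{10},\psi_{12},\chi_{14}$ as modular forms over ${\FF}_3$, shows that the weight-$42$ combination $\chi_{10}^3\psi_{12}+\psi_2^2\chi_{10}\chi_{14}^2-\chi_{14}^3$ is divisible by $\psi_2^3$ --- a fact one reads off from the behaviour of these forms along the vanishing locus of $\psi_2$, so that there is a weight-$36$ form $\chi_{36}$ with $\psi_2^3\chi_{36}=\chi_{10}^3\psi_{12}+\psi_2^2\chi_{10}\chi_{14}^2-\chi_{14}^3$ --- and, after comparing the Hilbert series of ${\FF}_3[\psi_2,\chi_{10},\psi_{12},\chi_{14},\chi_{36}]$ modulo this relation with the dimensions produced by the filtration, that these five forms generate $\mathcal{R}_2^{\rm ev}({\FF}_3)$ with no further relations.

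For the second assertion, the ring of $\mathrm{SL}_2$-invariants of binary sextics over ${\FF}_3$ is, as over ${\CC}$, free of rank two over its subring of invariants of even weight, with the extra generator of odd weight whose square lies in the even subring. Transporting this through the Torelli presentation --- and noting that $\chi_{10}$ has even weight, so that both the localisation at $\chi_{10}$ and the passage to the forms regular along ${\A}_{1,1}$ preserve the parity of the weight --- shows that every modular form of odd weight over ${\FF}_3$ equals $\chi_{35}$ times one of even weight, where $\chi_{35}$ is the weight-$35$ form coming from the odd generator. Hence $P:=\chi_{35}^2$ lies in $\mathcal{R}_2^{\rm ev}({\FF}_3)$ and $\mathcal{R}_2({\FF}_3)=\mathcal{R}_2^{\rm ev}({\FF}_3)\oplus\chi_{35}\,\mathcal{R}_2^{\rm ev}({\FF}_3)=\mathcal{R}_2^{\rm ev}({\FF}_3)[\chi_{35}]/(\chi_{35}^2-P)$.

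Finally, the ideal of cusp forms is the kernel of the Siegel operator $\Phi\colon\mathcal{R}_2({\FF}_3)\to\mathcal{R}_1({\FF}_3)={\FF}_3[b_2,\Delta]$. Because the target has only forms of even weight, every odd-weight form --- $\chi_{35}$ in particular --- lies in $\ker\Phi$. On $\mathcal{R}_2^{\rm ev}({\FF}_3)$ the map $\Phi$ is a ring homomorphism, which I would evaluate on the generators by a $q$-expansion computation at the boundary of $\tilde{\A}_2$: $\Phi(\psi_2)=b_2$ (the Hasse invariant of ${\A}_2$ restricts to that of ${\A}_1$), $\Phi(\psi_{12})$ has nonzero $\Delta$-component (so after subtracting a suitable multiple of $\psi_2^6$ we may take $\Phi(\psi_{12})=\Delta$), and $\Phi(\chi_{10})=\Phi(\chi_{14})=\Phi(\chi_{36})=0$. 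Then $\Phi$ is surjective, the relation maps to $0$, so $\Phi$ induces an isomorphism $\mathcal{R}_2^{\rm ev}({\FF}_3)/(\chi_{10},\chi_{14},\chi_{36})\xrightarrow{\sim}{\FF}_3[b_2,\Delta]$, whence $\ker(\Phi|_{\mathcal{R}_2^{\rm ev}})=(\chi_{10},\chi_{14},\chi_{36})$; together with the splitting of the previous paragraph this shows that the ideal of cusp forms in $\mathcal{R}_2({\FF}_3)$ is $(\chi_{10},\chi_{14},\chi_{35},\chi_{36})$. The main obstacle, I expect, is the first step combined with the order-of-vanishing analysis along ${\A}_{1,1}$: because $\mathrm{Sym}^6$ is not semisimple in characteristic $3$, the ring of invariants, the covariants one needs, and the degeneration of scalar forms along the product locus all require genuinely new computation rather than a reduction of the characteristic-zero results.
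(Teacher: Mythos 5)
Your outline follows essentially the same route as the paper: the quotient presentation of $\mathcal{M}_2\otimes{\FF}_3$ by binary sextics, the identification of modular forms with the invariants that are regular along ${\A}_{1,1}$, the construction of $\psi_2,\chi_{10},\psi_{12},\chi_{14},\chi_{36}$ and $\chi_{35}$ from the generators $A,B,C,D,E$ (of degrees $2,4,6,10,15$) of the characteristic-$3$ invariant ring, the order-of-vanishing analysis along ${\A}_{1,1}$, and the odd degree-$15$ invariant for the $\chi_{35}$ statement. Your Siegel-operator treatment of the cusp ideal is an acceptable substitute for the paper's criterion (a form is cuspidal iff its invariant is divisible by the discriminant $D$), and your Taylor expansion along ${\A}_{1,1}$ via the conormal bundle replaces the paper's semicontinuity argument from the characteristic-$0$ Fourier expansions; both are also sketched in the paper.

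The one step that does not close as written is the generation argument. The restriction filtration along ${\A}_{1,1}$ gives only $\dim M_k(\Gamma_2)\le\dim M_{k-10}(\Gamma_2)+\dim\mathrm{Sym}^2 M_k(\Gamma_1)$, and this upper bound strictly exceeds the corresponding coefficient of the Hilbert series of ${\FF}_3[\psi_2,\chi_{10},\psi_{12},\chi_{14},\chi_{36}]/J$ precisely when $k\equiv 0,2\pmod{12}$, so the comparison you propose is not by itself conclusive in those degrees. The paper closes this with an extra lemma, $\dim M_{k+2}(\Gamma_2)-\dim M_k(\Gamma_2)\ge r(k+2)-r(k)$, proved by showing that a nonzero polynomial in $D$, $B^3+A^3C-A^2B^2$, $BD$, $CD^3$ that is divisible by $A$ must contain a monomial involving $(BD)^3$, which the relation has already eliminated from the complementary spanning set; hence multiplication by $\psi_2$ cannot account for the putative extra forms. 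You would need this, or an equivalent argument, to finish; the rest of your plan, including the identity $(B^3+A^3C-A^2B^2)D^3=(BD)^3+A^3(CD^3)-A^2D(BD)^2$ that produces $\chi_{36}$ and the defining relation, matches the paper.
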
 

The generator $\psi_2$ is the Hasse invariant that vanishes on the locus of
non-ordinary abelian surfaces and $\chi_{10}$ is a form that 
vanishes on the locus of
products of elliptic curves. The ring of modular forms of degree $2$ in
characteristic $2$ is described in \cite{C-vdG}.
\end{section}
%%%%%%%%%%%%%%%%%%%%%%%%%%%%%
\begin{section}{The proof of Theorem 1.1.}
Since for $g=2$ the moduli stack ${\A}_g\otimes {\FF}_3$ has a canonical 
compactification due to Igusa we will use this compactification 
$\tilde{\A}_2 \otimes {\FF}_3$. We will denote the space of sections of $L^k$ on $\tilde{\A}_2 \otimes {\FF}_3$ by
$M_k(\Gamma_2)$ and we thus have ${\mathcal R}_2({\FF}_3)=\oplus_k M_k(\Gamma_2)$. 
We write $M_k(\Gamma_1)$ for the space $H^0(\tilde{\A}_1\otimes{\FF}_3,L^k)$.
The Satake compactification is denoted by ${\A}_2^{*}\otimes {\FF}_3$.
We denote the first Chern class of $L$ by $\lambda_1$.

\smallskip

We begin by constructing generators of weight $2$ and $10$.
The locus $V_1$ of abelian surfaces with $p$-rank $\leq 1$ is a divisor in ${\A}_{2}\otimes {\FF}_p$
and its closure 
$\overline{V}_1$ in $\tilde{\A}_2\otimes {\FF}_p$
has cycle class $(p-1)\lambda_1$ in the Chow group with ${\QQ}$-coefficients, 
so $[\overline{V}_1]=2\lambda_1$ for $p=3$, 
see \cite{vdG,E-vdG}. Therefore the effective divisor $\overline{V}_1$ is the
divisor of a section of $L^{\otimes 2}$ and
there is a modular form
$\psi_2$ of weight $2$ whose zero divisor is $\overline{V}_1$. It is determined up to multiplication
by a non-zero scalar. We will normalize it later. This form is known as
the Hasse invariant.  Multiplication by $\psi_2$  implies that
$\dim M_k(\Gamma_2) \leq \dim M_{k+2}(\Gamma_2)$. 

The divisor of products of elliptic curves $H_1:={\A}_{1,1}\otimes {\FF}_3$ gives rise to a second modular form. 
(The notation refers to the fact that $H_1$ is the Humbert surface of discriminant $1$.)
In the Chow group of codimension $1$ of $\tilde{\A}_2\otimes {\FF}_3$ (resp.\ ${\A}_2^*\otimes {\FF}_3$) 
we have the relation (cf.\ e.g. \cite[p.\ 317]{Mumford})
$$
2[\overline{H}_1]+[D]= 10\, \lambda_1 \quad ({\rm resp.\ } \quad 2[\overline{H}_{1}]= 10\, \lambda_1)\, ,
$$
with $D$ the divisor at infinity,
hence there exists a modular form of weight $10$ vanishing with multiplicity $2$ 
on ${H}_{1}$. We call this form  $\chi_{10}$
(up to a normalization to be determined later). 
The automorphism group of a generic product of
elliptic curves has an extra involution (when compared with
 the automorphism group of a generic principally
polarized abelian surface) and it acts by $-1$ on $L$,
hence every modular form of even weight
vanishes with even multiplicity along ${H}_{1}$.

Restriction to ${H}_{1}$ yields for even $k$ an exact sequence
$$
0 \to H^0(\mathcal{A}_2\otimes {\FF}_3,L^k\otimes O(-2H_1))\to 
H^0(\mathcal{A}_2\otimes {\FF}_3, L^k) \to H^0(H_1,L^k_{|H_1})
$$
and in view of the degree $2$ morphism $\mathcal{A}_1\times \mathcal{A}_1 \to
\mathcal{A}_{1,1}$ induced by interchanging the two factors, 
we can identify this with
$$
0 \to M_{k-10}(\Gamma_2) \to M_k(\Gamma_2) \to {\rm Sym}^2(M_k(\Gamma_1))\, ,
\eqno(1)
$$
where the second arrow is multiplication by $\chi_{10}$.
Moreover $M_{k-10}(\Gamma_2)=(0)$ for  $k <8$ since $L$ is ample
on ${\A}_2^*\otimes {\FF}_3$.
The exact sequence (1) and the fact that we know $M_k(\Gamma_1)$ implies that 
$\dim M_k(\Gamma_2)=1$  for $k=2,4,6,8$ and $\dim M_{10}(\Gamma_2)=2$ and 
$M_{10}(\Gamma_2)$ is
generated by $\psi_2^5$ and $\chi_{10}$.
\bigskip

We now turn to the construction of the other generators. We use the ideas of \cite{CFG-MathAnn}.
The Torelli map defines an embedding ${\mathcal M}_2\otimes {\FF}_3 \to {\A}_2\otimes {\FF}_3$. 
A smooth projective curve of genus $2$ can be given by an equation
$$
y^2=f(x) \quad \text{\rm with $f=\sum_{i=0}^6 a_i \, x^{6-i}$} \, .
\eqno(3)
$$
We let $V=\langle x_1,x_2 \rangle$ be the  ${\FF}_3$-vector space
generated by $x_1,x_2$
and write $f$ as a homogeneous polynomial $\sum_{i=0}^6 a_i x_1^{6-i}x_2^i$.
Note that a curve as in (3) comes with a basis of the space of regular
differentials, viz.\  $dx/y, \, xdx/y$.

We have a  description of ${\mathcal M}_2\otimes {\FF}_3$ 
as the stack quotient $[\mathcal{X}^0/{\rm GL}(V)]$
with $\mathcal{X}^0 \subset \mathcal{X}={\rm Sym}^6(V) \otimes \det(V)^{-2}$
the locus given by the non-vanishing of the discriminant, see
\cite[Section 3, p.\ 3]{CFG-MathComp}.

The pullback to ${\mathcal X}^0$ of the Hodge bundle 
under the composition of ${\mathcal X}^0 \to {\mathcal M}_2$
with the Torelli map ${\mathcal M}_2 \hookrightarrow {\A}_2$
is the equivariant bundle $V$ on ${\mathcal X}^0$ as the basis  
$dx/y, \, xdx/y$
of the space of regular differentials on the curve $y^2=f(x)$ shows.
The pullback of $L$ is $\det(V)$. As a consequence pulling back defines a homomorphism
$$
\mu: {\mathcal R}_2({\FF}_3) \to I \eqno(4)
$$
with $I$ the ring of invariants of the action of ${\rm GL}(V)$ on ${\rm Sym}^6(V)$.
Here an invariant is a polynomial in $a_0,\ldots,a_6$, the coefficients of $f$
that is invariant under ${\rm SL}(V)$. Since the image of ${\mathcal M}_2$ in ${\A}_2$
is a Zariski open part with complement $H_1$, not every invariant corresponds to a modular form; but
every invariant corresponds to a rational modular form that is regular 
outside $H_{1}$. In particular, it becomes regular on all of ${\A}_2$
when multiplied with a sufficiently high power of $\chi_{10}$. This provides us with 
homomorphisms
$$
{\mathcal R}_2({\FF}_3) {\buildrel \mu \over \longrightarrow}  I 
{\buildrel \nu \over \longrightarrow} {\mathcal R}_2({\FF}_3)_{\chi_{10}}\, ,
$$
where ${\mathcal R}_2({\FF}_3)_{\chi_{10}}$ is obtained from ${\mathcal R}_2({\FF}_3)$
by allowing powers of $\chi_{10}$ in the denominator. We have $\nu \circ \mu={\rm id}$.

\smallskip
This generalizes as follows to vector-valued modular forms.
For each finite dimensional irreducible representation $\rho$ of ${\rm GL}(2)$ there is
a vector bundle ${\EE}_2^{\rho}$ obtained from ${\EE}_2$ by applying a Schur functor.
Such a $\rho$ is of the form ${\rm Sym}^j({\rm St}) \otimes \det^k({\rm St})$
with ${\rm St}$ the standard representation of ${\rm GL}(V)$. A section of
${\rm Sym}^j ({\EE}_2) \otimes \det({\EE}_2)^k$ over ${\A}_2$
is called a modular form of degree $2$
and weight $(j,k)$. The Koecher principle also applies to these modular forms:
sections of ${\EE}_2^{\rho}$ over ${\A}_2$ extend over $\tilde{\A}_2$, see
\cite[Prop.\ 1.5, p.\ 140]{F-C}.
We write 
$$
M_{j,k}(\Gamma_2)=H^0(\tilde{\A}_2\otimes {\FF}_3, {\rm Sym}^j({\EE}_2) \otimes \det({\EE}_2)^k)
$$
and we consider the ${\mathcal R}_2({\FF}_3)$-module
$$
M= \oplus_{j,k} M_{j,k}(\Gamma_2)\, .
$$
It is even a ring. The map (4) can be extended to a map from $M$ to the ring of
covariants. Here a covariant can be described as an invariant for the action of ${\rm GL}(V)$
on $V \oplus {\rm Sym}^6(V)$. Alternatively, covariants can be obtained by taking an
equivariant embedding of an irreducible ${\rm GL}(V)$-representation 
$U \to {\rm Sym}^d({\rm Sym}^6(V))$,
or equivalently, an equivariant map 
$$
\varphi: {\FF}_3 \to {\rm Sym}^d({\rm Sym}^6(V))\otimes U^{\vee}
$$
and then $\Phi=\varphi(1)$ is a covariant. If $U$ is an irreducible representation of
highest weight $(w_1,w_2)$ then one may view $\Phi$ as a homogeneous 
form in $a_0,\ldots,a_6$ of degree $d$
and in $x_1, x_2$ of degree $w_1-w_2$, see \cite{Springer, G-Y} and 
\cite{CFG-MathAnn}. For example, taking $U={\rm Sym}^6(V)$ and $d=1$
yields the covariant $\Phi=f$, the universal binary sextic. 
Covariants form a ring ${\mathcal C}$ that was much studied in the 19th and early 20th century. 
Grace and Young determined generators of this ring in \cite{G-Y}.

The maps ${\mathcal R}_2({\FF}_3) \to I \to {\mathcal R}_2({\FF}_3)_{\chi_{10}}$ now extend to
$$
M {\buildrel \mu \over \longrightarrow }\, 
{\mathcal C} {\buildrel \nu \over \longrightarrow} M_{\chi_{10}}\, ,
$$
where $M_{\chi_{10}}$ is obtained from $M$ by admitting powers of $\chi_{10}$ as denominators.
We have $\nu \circ \mu= {\rm id}_M$.

\bigskip

The image under $\nu$ of the covariant $f$, the universal binary sextic, 
 is a rational modular
form $\chi_{6,-2}$, that is, a rational section of ${\rm Sym}^6({\EE}_2) \otimes {\det}({\EE}_2)^{-2}$
that is regular after multiplication by an appropriate  power of $\chi_{10}$. 
The power $-2$ comes from the twisting used in the description of the
stack quotient $[\mathcal{X}^0/{\rm GL}(V)]$, where $\mathcal{X}^0 \subset
{\rm Sym}^6(V) \otimes \det(V)^{-2}$, see  \cite[Section 3, p.\ 3]{CFG-MathComp}.

This construction was given in \cite{CFG-MathAnn} in characteristic zero and yields a
meromorphic modular form, here denoted $\varphi_{6,-2}$, 
that becomes holomorphic after multiplication by $\chi_{10}$.
The reduction of the characteristic zero rational modular form $\varphi_{6,-2}$ yields a
rational modular form in characteristic $3$. 
This implies that $\chi_{6,-2}$ becomes
regular after multiplication by $\chi_{10}$. We can write the form $\chi_{6,-2}$ locally
on ${\A}_2\otimes {\FF}_3$
symbolically as 
$$
\chi_{6,-2}= \sum_{i=0}^6 \alpha_i X_1^{6-i} X_2^i\, , \eqno(5)
$$
where the monomials $X_1^{6-i} X_2^i$ are dummies to indicate the coordinates in the fibres
of ${\rm Sym}^6({\EE}_2) \otimes {\det}({\EE}_2)^{-2}$.  Here we view $\alpha_i$ locally as 
a rational function on ${\A}_2 \otimes {\FF}_3$. 
Using the local expression (5) one can give the image $\nu(T)$ of an invariant 
$T=T(a_0,\ldots,a_6)$ locally by
$T(\alpha_0,\ldots,\alpha_6)$.

We note that interchanging $X_1$ and $X_2$ induces an involution replacing $\alpha_i$
by $\alpha_{6-i}$. 

Comparing with the characteristic $0$ case and using semi-continuity
we see that the orders of the rational functions $\alpha_i$ along the divisor $H_1$
are at least equal to the orders of their complex analogues along $H_1$. The Fourier 
expansion in characteristic $0$ given in \cite[page 1658]{CFG-MathAnn} implies the following inequalities
for the orders of $\alpha_i$ along $H_1$ in characteristic $3$:
$$
{\rm ord}_{H_{1}}(\alpha_0,\ldots,\alpha_6)=
(\geq 2, \geq 1, \geq 0, \geq -1, \geq 0, \geq 1, \geq 2)\, . \eqno(6)
$$
Moreover, the symmetry that interchanges $x_1$ and $x_2$ implies that the orders
of $\alpha_i$ and $\alpha_{6-i}$ along $H_1$ are equal.
Another way to see the estimates for the orders is by developing $\chi_{6,8}=
\chi_{6,-2} \chi_{10}$ along the locus ${\mathcal A}_{1,1} \otimes {\FF}_3 \subset
 {\mathcal A}_2\otimes {\FF}_3$. Since the pullback of the Hodge bundle ${\EE}_2$
to ${\mathcal A}_1 \times {\mathcal A}_1$ via ${\mathcal A}_1^2 \to {\mathcal A}_{1,1} 
\subset {\mathcal A}_2$ is $\oplus_{i=0}^6 p_1^*({\EE}_1)^i \otimes p_2^*({\EE}_1)^{6-i}$
the restriction of $\alpha_i \chi_{10}$ lies in $S_{14-i}(\Gamma_1)\otimes S_{8+i}(\Gamma_1)$ and this is zero. The next Taylor term in the Taylor development along 
${\mathcal A}_{1,1}$ lies in $S_{15-i}(\Gamma_1) \otimes S_{9+i}(\Gamma_1)$ and this is zero for $i\neq 3$.
\bigskip

The ring of invariants $I$ for the action of ${\rm GL}(V)$ on ${\rm Sym}^6(V)$ in
characteristic $3$ is generated by invariants $A$, $B$, $C$, $D$ and $E$ of degree
$2$, $4$, $6$, $10$ and $15$, see e.g.\ \cite{Igusa} or \cite{Geyer}.
The invariants $A,B,C,D$ that we use here can be expressed 
in the reductions modulo $3$
of the invariants $J_2,J_4,J_6$ et $J_{10}$ given in \cite{Liu}:
$A=-J_2 (\bmod 3)$, $B= -J_4 (\bmod 3)$, $C=-J_6-A^3 (\bmod 3)$, $D= J_{10} (\bmod 3)$. The invariant $E$ can be found in \cite[p.\ 848]{Igusa1967}.

The invariant $A$ has the form $A=a_1a_5-a_2a_4$. We know of the existence of a
modular form $\psi_2$ of weight $2$. Under the map $\mu$ it must map to a non-zero
multiple of $A$. 
We fix $\psi_2$ by requiring $\mu(\psi_2)=A$.
The restriction to $H_1$ of the Hasse invariant $\psi_2$ is a non-zero multiple
of ${\rm Sym}^2(b_2)$, with $b_2$ the Hasse invariant for $g=1$, 
hence $\psi_2$ does not vanish identically on $H_{1}$. 

By the inequalities (6) and the expression for $A$ 
we see that ${\rm ord}_{{H}_{1}}(\alpha_2)=0=
{\rm ord}_{{H}_{1}}(\alpha_4)$  and
$$
{\rm ord}_{H_{1}}(\alpha_0,\ldots,\alpha_6)=
(\geq 2, \geq 1, 0, \geq -1, 0, \geq 1, \geq 2)\, .
$$

In degree $4$ we find another invariant $B$, not a multiple of $A^2$:
$$
\begin{aligned}
B=2\, a_0a_1a_5a_6+a_0a_2a_4a_6+2\, a_0a_2a_5^2+2\, a_0a_4^3+
2\, a_1^2a_4a_6+2\, a_1a_2a_4a_5+ & \cr
a_1a_3^2a_5+a_1a_3a_4^2+2\,a_2^3a_6+a_2^2a_3a_5+a_2^2\, a_4^2+2\, a_2a_3^2a_4
& . \cr
\end{aligned}
$$
Since we know $\dim M_4(\Gamma_2)=1$ there cannot be a regular
modular form in weight $4$ that is not a multiple of $\psi_2^2$.
This implies that ${\rm ord}_{H_1}(\alpha_3)<0$ and hence ${\rm ord}_{H_1}(\alpha_3)=-1$.
Thus $B=(a_1a_5-a_2a_4) a_3^2+ (a_1a_4^2+a_2^2a_5) a_3+ \cdots$
defines a rational modular form $\chi_B=\nu(B)$ of weight $4$
with order $-2$ along $H_1$.
Since $\chi_{10}$ vanishes with multiplicity $2$ along $H_1$ we thus find that  
$$\chi_{14}:=\chi_B \chi_{10}
$$ 
is a regular modular form of weight $14$.

The vector space of invariants of degree $6$ is generated by $A^3$, $AB$ and an invariant $C$
$$
C= 2 \, a_3^6 + A \, a_3^4+ 2(a_1a_4^2+a_2^2a_5)a_3^3+ \cdots
$$
and we see that $\chi_C=\nu(C)$ has order $-6$ along $H_1$.
In degree $10$ there is a new invariant
$$
D=(a_1a_5)^3a_3^4+(a_0a_2^3a_5^3+a_1^3a_4^3a_6+ 2 a_1^3a_4^2a_5^2+2a_1^2a_2^2a_5^3)a_3^3+\cdots
$$
yielding a modular form that vanishes with multiplicity $\geq 2$ on ${H}_{1}$.
Indeed, since $\alpha_1\alpha_5$ vanishes with multiplicity $\geq 2$
the first term $(\alpha_1\alpha_5)^3 \alpha_3^4 $ 
vanishes with order $\geq 2$; the next terms also vanish with order $\geq 2$
as one easily checks.
Therefore $\chi_D$ is regular and vanishes with multiplicity $\geq 2$. Since $\chi_D$ is not zero, 
it must be a multiple of $\chi_{10}$ and then vanishes on $H_1$ with 
multiplicity~$2$. 
This implies that the order of vanishing of $\alpha_1$ and $\alpha_5$ 
along $H_1$ is $1$.

\begin{corollary} We have
$ {\rm ord}_{H_1}(\alpha_0,\ldots,\alpha_6)=(\geq 2, 1, 0, -1, 0, 1, \geq 2)$.
\end{corollary}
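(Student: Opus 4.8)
\emph{Proof proposal.} The corollary is a bookkeeping of the orders established in the preceding analysis, and the plan is to collect those results, filling in the step that was only asserted. Throughout one uses the symmetry $\mathrm{ord}_{H_1}(\alpha_i)=\mathrm{ord}_{H_1}(\alpha_{6-i})$ coming from the involution $X_1\leftrightarrow X_2$, so that it suffices to pin down $\mathrm{ord}_{H_1}(\alpha_1)$, $\mathrm{ord}_{H_1}(\alpha_2)$ and $\mathrm{ord}_{H_1}(\alpha_3)$; the bounds $\mathrm{ord}_{H_1}(\alpha_0),\mathrm{ord}_{H_1}(\alpha_6)\geq 2$ are part of $(6)$. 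For $\alpha_2$ (and $\alpha_4$): since $\mu(\psi_2)=A=a_1a_5-a_2a_4$ with $\psi_2$ regular and not identically zero on $H_1$, the function $A(\alpha)=\alpha_1\alpha_5-\alpha_2\alpha_4$ has order $0$ along $H_1$, and as $\mathrm{ord}_{H_1}(\alpha_1\alpha_5)\geq 2$ by $(6)$ we get $\mathrm{ord}_{H_1}(\alpha_2)=\mathrm{ord}_{H_1}(\alpha_4)=0$. For $\alpha_3$: if one had $\mathrm{ord}_{H_1}(\alpha_3)\geq 0$ then, using $B=A\,a_3^2+(a_1a_4^2+a_2^2a_5)\,a_3+\cdots$ together with the orders just found, every monomial of $B(\alpha)$ would have order $\geq 0$, so $\nu(B)$ would be a regular modular form of weight $4$; since $\dim M_4(\Gamma_2)=1$ it would be a scalar multiple of $\psi_2^2$, and applying $\mu$ this would force $B\in\mathbb F_3\cdot A^2$, contradicting that $B$ is a new invariant in degree $4$. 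Hence $\mathrm{ord}_{H_1}(\alpha_3)<0$, and with $(6)$ this gives $\mathrm{ord}_{H_1}(\alpha_3)=-1$.

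It remains to prove $\mathrm{ord}_{H_1}(\alpha_1)=\mathrm{ord}_{H_1}(\alpha_5)=1$. Using the orders already obtained, a term-by-term check shows that every monomial of $D(\alpha)$ has order $\geq 2$ along $H_1$, so $\chi_D=\nu(D)$ is a regular modular form of weight $10$ vanishing on $H_1$ to order $\geq 2$; since $M_{10}(\Gamma_2)=\langle\psi_2^5,\chi_{10}\rangle$, since $\psi_2^5$ does not vanish on $H_1$, and since $D\neq 0$ (so $\chi_D\neq 0$), $\chi_D$ is a \emph{nonzero} multiple of $\chi_{10}$ and therefore $D(\alpha)$ vanishes on $H_1$ to order \emph{exactly} $2$. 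Now pull back along $\mathcal A_1\times\mathcal A_1\to\mathcal A_{1,1}=H_1$ and expand transversally. The monomial $(a_1a_5)^3a_3^4$ of $D(\alpha)$ contributes order $6\,\mathrm{ord}_{H_1}(\alpha_1)-4$ along $H_1$, which equals $2$ precisely when $\mathrm{ord}_{H_1}(\alpha_1)=1$. On the other hand, $\chi_{10}$ is a cusp form vanishing to order exactly $2$ along $H_1$, so its leading transverse jet is a nonzero element of $S_{12}(\Gamma_1)\otimes S_{12}(\Gamma_1)=\mathbb F_3\cdot(\Delta\otimes\Delta)$; the same then holds for the order-$2$ part of $D(\alpha)$, and, using $\mathcal R_1(\mathbb F_3)=\mathbb F_3[b_2,\Delta]$ and $S_k(\Gamma_1)=\Delta\cdot M_{k-12}(\Gamma_1)$, one checks that the class $\Delta\otimes\Delta$ can only arise from the monomial $(a_1a_5)^3a_3^4$. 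Since it does arise, $\mathrm{ord}_{H_1}(\alpha_1)=1$, which finishes the corollary. (An easier partial statement, worth recording, is that $\mathrm{ord}_{H_1}(\alpha_1)$ must be odd: the leading transverse jet of $\alpha_1$ is a nonzero modular form on $\mathcal A_1\times\mathcal A_1$ of weight $(3+\mathrm{ord}_{H_1}(\alpha_1),\,\mathrm{ord}_{H_1}(\alpha_1)-1)$, and $M_k(\Gamma_1)=0$ for $k$ odd.)

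The main obstacle is the last step. Two points require care. First, $D$ is the reduction mod $3$ of the discriminant $J_{10}$ of the binary sextic and contains further monomials — for instance $a_0^2a_3^6a_6^2$ — that also reach order $2$ in $D(\alpha)$ (at least when $\mathrm{ord}_{H_1}(\alpha_0)=\mathrm{ord}_{H_1}(\alpha_6)=2$), so one must show that the $\Delta\otimes\Delta$-contribution of all such extra monomials vanishes or is absent. Second, the transverse jets of the $\alpha_i$ along $H_1$ need not be holomorphic on $\mathcal A_1\times\mathcal A_1$ — they may acquire poles at the cusps, since $\chi_{6,-2}$ is only a rational form — so the $b_2$-versus-$\Delta$ dichotomy must be applied to the whole combination $D(\alpha)$ rather than to individual jets. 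A route that isolates this difficulty is to assume $\mathrm{ord}_{H_1}(\alpha_1)\geq 2$ and show $\mathrm{ord}_{H_1}D(\alpha)\geq 3$: the term-by-term check already yields $2e_0+e_1+e_5+2e_6-e_3\geq 2$ for every monomial $\prod a_i^{e_i}$ of $D$, so any monomial with $e_1+e_5\geq 1$ has order $\geq 2+(\mathrm{ord}_{H_1}(\alpha_1)-1)(e_1+e_5)\geq 3$, and $(a_1a_5)^3a_3^4$ has order $\geq 8$; one is then reduced to showing that the order-$2$ part coming from the $a_1,a_5$-free monomials of $D$ vanishes, which is exactly the place where the leading-jet/cusp-form input is needed.
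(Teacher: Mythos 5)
Your determination of $\mathrm{ord}_{H_1}(\alpha_2)=\mathrm{ord}_{H_1}(\alpha_4)=0$ (via $A=a_1a_5-a_2a_4$ and the non-vanishing of $\psi_2$ on $H_1$), of $\mathrm{ord}_{H_1}(\alpha_3)=-1$ (via $B$ and $\dim M_4(\Gamma_2)=1$), and the reduction of the remaining claim to ``$D(\alpha)$ vanishes to order exactly $2$ along $H_1$ because $\chi_D$ is a nonzero multiple of $\chi_{10}$'' all coincide with the paper's argument and are correct. The gap is exactly where you place it: the passage from $\mathrm{ord}_{H_1}D(\alpha)=2$ to $\mathrm{ord}_{H_1}(\alpha_1)=1$. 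Your sentence ``one checks that the class $\Delta\otimes\Delta$ can only arise from the monomial $(a_1a_5)^3a_3^4$'' is an unproved assertion which you yourself call into question two sentences later, so as written the decisive step is not established. (To be fair, the paper is equally terse here: it says only ``This implies that the order of vanishing of $\alpha_1$ and $\alpha_5$ along $H_1$ is $1$.'') Your final paragraph correctly isolates what must be shown: if $e:=\mathrm{ord}_{H_1}(\alpha_1)\geq 2$, every monomial of $D$ containing $a_1$ or $a_5$ acquires order $\geq 3$, so one must rule out an order-$2$ contribution from the $a_1,a_5$-free part of $D$.

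That residual check is finite and elementary, and you could have closed it rather than leaving it as an ``obstacle.'' A monomial $\prod a_i^{e_i}$ of $D$ with $e_1=e_5=0$ contributes order at least $2e_0+2e_6-e_3$, and the homogeneity constraints $\sum e_i=10$, $\sum i\,e_i=30$ leave only a short list of exponent vectors with $2e_0+2e_6-e_3\leq 2$. Most of these are supported on index sets for which the sextic acquires a double root at $0$ or $\infty$ (for instance any monomial in $a_0,a_2,a_3,a_4$ alone), so they cannot occur in the discriminant at all. Your specific worry about $a_0^2a_3^6a_6^2$ also evaporates: the discriminant restricted to $a_1=a_2=a_4=a_5=0$ equals, up to the unit normalizing $J_{10}$, $729\,a_0^2a_6^2(a_3^2-4a_0a_6)^3$, which is identically $0$ in characteristic $3$, so no monomial supported on $\{a_0,a_3,a_6\}$ survives in $D$. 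What remains is a handful of explicit candidates (such as $a_0a_2^3a_3^2a_4^3a_6$) whose coefficients in $D$ modulo $3$ must be checked directly, or else handled by the leading-jet argument you sketch (the order-$2$ jet of $D(\alpha)$ must be a nonzero multiple of $\Delta\otimes\Delta$, and one compares the possible bi-weights of the jets of the $\alpha_i$). Until one of these routes is actually carried out, the equality $\mathrm{ord}_{H_1}(\alpha_1)=\mathrm{ord}_{H_1}(\alpha_5)=1$ is not proved; everything else in your write-up is fine and is the paper's own proof.
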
 

We fix $\chi_{10}$ by setting it equal to $\chi_D=\nu(D)$. This fixes $\chi_{14}$ too.

In a similar manner one checks that the rational modular form $\psi_S=\nu(S)$
with $S$ equal to
$$
S= B^3+A^3C-A^2B^2=(a_1a_4^2+a_2^2a_5)^3a_3^3+ \cdots
$$
is regular too. We put $\psi_{12}=\psi_S$. We thus find a $3$-dimensional
subspace of $M_{12}(\Gamma_2)$ generated by $\psi_2^6, \psi_2 \chi_{10}$ and
$\psi_{12}$. From the fact that $B$ and $D$ are not divisible by $A$ 
we see that $\chi_{14}$ does not lie in $\psi_2 M_{12}(\Gamma_2)$.
Therefore $\dim M_{12}(\Gamma_2) < \dim M_{14}(\Gamma_2)$. Since we know by (1) that
$\dim M_{14}(\Gamma_2)\leq 4$ we conclude that $\dim M_{12}(\Gamma_2)=3$.

A further generator is 
$$
\chi_{36}=\nu(C D^3)=\chi_{C}\chi_{10}^3\, .
$$
Since the orders of $\chi_C$ and $\chi_{10}$ along $H_1$
are $-6$ and $2$ the modular form
$ \chi_{36}$
is regular and does not vanish identically on $H_1$. 
The modular form $\chi_{36}$ is not contained in the subring generated
by $\psi_2$,$\chi_{10}$, $\psi_{12}$ and $\chi_{14}$ as one sees by looking at
the invariants. 
We have the identity
$$
(B^3+A^3C-A^2B^2) D^3= B^3 D^3 + A^3C D^3 -A^2DB^2D^2
$$
by which we can express $\psi_{12}\chi_{10}^3$ in the other generators:
$$
\psi_{12}\chi_{10}^3=\chi_{14}^3+\psi_2^3\chi_{36}-\psi_2^2\chi_{10} \chi_{14}^2\, . \eqno(6)
$$
Since $A,B,C,D$ are generators of the ring of invariants and are algebraically
independent the forms $\psi_2, \chi_{10}, \psi_{12}, \chi_{14}$ are algebraically independent. The form $\chi_{36}$ then satisfies the algebraic relation (6)
and since there is no non-trivial 
relation of lower weight involving $\chi_{36}$ it
implies that this relation generates
the ideal of  relations between the generators $\psi_2$, $\chi_{10}$,
$\psi_{12}$, $\chi_{14}$ and $\chi_{36}$.

The forms  $\psi_2$, $\chi_{10}$,
$\psi_{12}$, $\chi_{14}$ and $\chi_{36}$ generate a subring $R^{\rm ev}$ 
of the ring
${\mathcal R}_2^{\rm ev}({\FF}_3)$ with generating function
$$
G=\frac{(1-t^{42})}{(1-t^2)(1-t^{10})(1-t^{12})(1-t^{14})(1-t^{36})}\, .
$$
and by the Riemann-Roch theorem we have
$\dim M_{k}(\Gamma_2)=k^3/1080 + O(k^2)$ for even $k$. 
Note that
$$
\frac{42}{2\cdot 10 \cdot 12 \cdot 14\cdot 36}= \frac{1}{2880}\, .
$$
On the other hand we have $c_1(L)^3=1/2880$, see \cite[p.\ 74]{vdG}. 
We can use the degree of ${\rm Proj}({\mathcal R}_2^{\rm ev}({\FF}_3))$
to show that there cannot be more generators
of ${\mathcal R}_2^{\rm ev}({\FF}_3)$, but 
one can see this also in a more elementary way as follows.

Let $d(k)=\dim M_{k}(\Gamma_2)$ and $r(k)=\dim R_k$ where 
$R_k= R^{\rm ev} \cap M_k(\Gamma_2)$.
\begin{proposition} We have $d(k)=r(k)$ for even $k\geq 0$.
\end{proposition}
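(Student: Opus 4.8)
The plan is to prove the two inequalities $r(k)\le d(k)$ and $d(k)\le r(k)$ separately. The first is trivial, since $R^{\mathrm{ev}}\subset {\mathcal R}_2^{\mathrm{ev}}({\FF}_3)$ gives $R_k\subseteq M_k(\Gamma_2)$ for every $k$. So the content is the reverse inequality $d(k)\le r(k)$, which we attack by induction on $k$ using the exact sequence (1). We already know the statement holds in the base cases: for $k\in\{2,4,6,8\}$ we have $d(k)=1=r(k)$ (the only forms being powers of $\psi_2$), and for $k=10,12,14$ we have computed $d(k)=2,3,4$, which match the dimensions $r(k)$ read off from the generating function $G$ (equivalently, the free monomials $\psi_2^a\chi_{10}^b\psi_{12}^c\chi_{14}^e\chi_{36}^g$ of the given weight, modulo the single relation (6) only once the weight reaches $42$).

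For the inductive step, fix even $k$ and assume $d(k')=r(k')$ for all even $k'<k$. Apply (1):
$$
0 \to M_{k-10}(\Gamma_2) \to M_k(\Gamma_2) \to \mathrm{Sym}^2(M_k(\Gamma_1))\, ,
$$
so that $d(k)\le d(k-10)+\dim\mathrm{Sym}^2(M_k(\Gamma_1))$. By the induction hypothesis $d(k-10)=r(k-10)$, and multiplication by $\chi_{10}$ embeds $R_{k-10}\hookrightarrow R_k$. Hence it suffices to show that the image of $R_k$ in $\mathrm{Sym}^2(M_k(\Gamma_1))$ under restriction to $H_1$ has dimension at least $d(k)-d(k-10)$; combined with the displayed inequality this forces equality throughout and $d(k)=r(k)$. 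Concretely, I would show that the composite
$$
R_k \longrightarrow M_k(\Gamma_2) \longrightarrow \mathrm{Sym}^2(M_k(\Gamma_1))
$$
already surjects onto the whole image of $M_k(\Gamma_2)$, i.e. that $R^{\mathrm{ev}}$ and ${\mathcal R}_2^{\mathrm{ev}}({\FF}_3)$ have the same restriction to $H_1$. Since $\dim M_k(\Gamma_1)$ is known explicitly from ${\mathcal R}_1({\FF}_3)={\FF}_3[b_2,\Delta]$, the target $\mathrm{Sym}^2(M_k(\Gamma_1))=\mathrm{Sym}^2\big(\bigoplus_{i}{\FF}_3 b_2^{?}\Delta^{?}\big)$ is completely explicit; it is spanned by monomials $b_2^{a}\Delta^{b}\otimes b_2^{a'}\Delta^{b'}$ symmetrised.

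The key point — and the main obstacle — is to produce enough elements of $R^{\mathrm{ev}}$ whose restrictions to $H_1$ span the relevant part of $\mathrm{Sym}^2(M_k(\Gamma_1))$. For this I would compute the restrictions of the generators to $H_1$: we already know $\psi_2|_{H_1}$ is a nonzero multiple of $\mathrm{Sym}^2(b_2)$ (i.e. $b_2\otimes b_2$), and $\chi_{10}$ vanishes on $H_1$. One then needs the restrictions of $\psi_{12}$, $\chi_{14}$ and $\chi_{36}$ to $H_1$; $\chi_{14}=\chi_B\chi_{10}$ and $\chi_{36}=\chi_C\chi_{10}^3$ vanish on $H_1$ (with the single exception that the order of $\chi_{36}$ along $H_1$ is exactly $0$ once multiplied out — indeed the text notes $\chi_{36}$ does not vanish identically on $H_1$, coming from the order $-6$ of $\chi_C$), while $\psi_{12}=\psi_S$ restricts to something expressible via (6) as a combination of $b_2$-power monomials. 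The cleanest route is to check that, together with the $\Gamma_1$-level identity $b_2^?$ relating $b_2$ and $\Delta$ in characteristic $3$ (the Deligne relations for $g=1$), the restrictions $\psi_2|_{H_1}=\mathrm{Sym}^2(b_2)$ and $\chi_{36}|_{H_1}$ (a nonzero symmetric form of the appropriate bidegree) already generate, as a ring, a subring of $\bigoplus_k \mathrm{Sym}^2(M_k(\Gamma_1))$ whose graded piece in each even weight has dimension equal to $\dim M_k(\Gamma_2)/\chi_{10}M_{k-10}(\Gamma_2)$. If that numeric matching works out — which it must, since the Hilbert series $G$ was constructed precisely to have $c_1(L)^3=1/2880$ — then the short exact sequence closes up and the induction goes through. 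In practice I expect the surjectivity onto $\mathrm{Sym}^2(M_k(\Gamma_1))$ will only hold after a finite computation in low weights (say $k\le 48$ or so), after which it becomes automatic by multiplicativity; the obstacle is organising that finite check cleanly rather than any conceptual difficulty.
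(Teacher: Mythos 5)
Your overall framework --- the trivial inequality $r(k)\le d(k)$, the base cases through weight $14$, and an induction driven by the exact sequence (1) --- is the same as the paper's, but your inductive step has a genuine gap exactly where the argument is delicate. The sequence (1) only yields $d(k)\le d(k-10)+c(k)(c(k)+1)/2$ with $c(k)=\dim M_k(\Gamma_1)$, and this closes the induction precisely when $r(k)-r(k-10)=c(k)(c(k)+1)/2$, i.e.\ when the restriction map $M_k(\Gamma_2)\to{\rm Sym}^2(M_k(\Gamma_1))$ is forced to be surjective. Checking against the generating function $G$ shows this holds for $k\not\equiv 0,2 \pmod{12}$ but \emph{fails} for $k\equiv 0,2\pmod{12}$: already at $k=12$ one has $r(12)-r(2)=2$ while $c(12)(c(12)+1)/2=3$, so the restriction map is not surjective there. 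For those residues your plan is to show instead that $R_k$ and $M_k(\Gamma_2)$ have the same image in ${\rm Sym}^2(M_k(\Gamma_1))$; but computing the restrictions to $H_1$ of the generators of $R^{\rm ev}$ can only bound the image of $R_k$ from \emph{below}, whereas what you need is an upper bound on the image of the unknown space $M_k(\Gamma_2)$ --- that is circular, and the appeal to the leading coefficient $1/2880$ of the Hilbert polynomial cannot decide individual graded pieces.

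The paper fills this gap with a different mechanism: a separate lemma asserting $d(k+2)-d(k)\ge r(k+2)-r(k)$, proved by writing $R_{k+2}=\psi_2R_k\oplus N_{k+2}$ with $N_{k+2}$ spanned by the monomials $\chi_{10}^a\psi_{12}^b\chi_{14}^c\chi_{36}^d$ ($c\le 2$) and showing $N_{k+2}\cap\psi_2M_k(\Gamma_2)=(0)$ via a divisibility-by-$A$ argument in the ring of invariants. Combined with the equality already established at the neighbouring weights $\not\equiv 0,2\pmod{12}$, this forces $d(k)=r(k)$ in the exceptional residues. Some such step (or another genuine upper bound on $M_k(\Gamma_2)|_{H_1}$) is indispensable; without it your induction stalls at every even $k\equiv 0,2\pmod{12}$. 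A smaller point: $\chi_{14}=\chi_B\chi_{10}$ has order exactly $0$ along $H_1$ (order $-2$ plus $2$), so like $\chi_{36}$ it does \emph{not} vanish identically on $H_1$; your bookkeeping of which generators restrict to zero there needs correcting even in the cases where your argument does go through.
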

\begin{proof} We know that $d(k) \geq r(k)$ for even $k$ and $d(k)=r(k)$ for
even $0\leq k \leq 14$. Suppose by induction that $d(k) =r(k)$ for even $k \leq m$.
The exact sequence (1) gives the upper bound
$d(k) \leq r(k-10)+ c(k)(c(k)+1)/2$ for $k\leq m+10$, where
$c(k)=\dim M_k(\Gamma_1)= \lfloor k/12\rfloor +1$.
Using the generating function $G$ one sees that
$r(k)-r(k-10)= c(k)(c(k)+1)/2$ for $k\not\equiv 0 \, (\bmod \, 12)$ and
$k \not\equiv  2 \, (\bmod \, 12)$. Hence $d(k)=r(k)$ for even $k\leq m+10$
with $k \not\equiv 0, 2 (\bmod 12)$. But we have
$$
d(k+2)-d(k) \geq r(k+2)-r(k)\, ,
$$
as we show in the next lemma. This proves $d(k)=r(k)$ for even $k \leq m+10$.
Therefore we conclude the proof by induction.
\end{proof}

\begin{lemma}
We have $d(k+2)-d(k) \geq r(k+2)-r(k)$ for even $k\geq 0$.
\end{lemma}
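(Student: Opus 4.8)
The plan is to rewrite both sides of the inequality as dimensions of cokernels of multiplication by $\psi_2$, and then to reduce the whole statement to a saturation property in the ring of invariants. Since $\mathcal{R}_2({\FF}_3)=\oplus_k M_k(\Gamma_2)$ is a graded domain --- being the ring of sections of powers of $L$ on the irreducible $\tilde{\A}_2\otimes{\FF}_3$ --- multiplication by $\psi_2\neq 0$ is injective on $\mathcal{R}_2({\FF}_3)$ and on the subring $R^{\rm ev}$, so $d(k+2)-d(k)=\dim\bigl(M_{k+2}(\Gamma_2)/\psi_2 M_k(\Gamma_2)\bigr)$ and $r(k+2)-r(k)=\dim\bigl(R_{k+2}/\psi_2 R_k\bigr)$. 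The inclusion $R^{\rm ev}\hookrightarrow\mathcal{R}_2^{\rm ev}({\FF}_3)$ induces, for each even $k$, a map $R_{k+2}/\psi_2 R_k\to M_{k+2}(\Gamma_2)/\psi_2 M_k(\Gamma_2)$, and the asserted inequality holds as soon as this map is injective for every even $k$. But injectivity for all $k$ is exactly the statement that $R^{\rm ev}$ is $\psi_2$-saturated in $\mathcal{R}_2^{\rm ev}({\FF}_3)$: whenever $h\in\mathcal{R}_2^{\rm ev}({\FF}_3)$ satisfies $\psi_2 h\in R^{\rm ev}$, then $h\in R^{\rm ev}$. So it suffices to prove this saturation property.

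To do that I would pass to invariants through $\mu$. The homomorphism $\mu$ is injective (it has the left inverse $\nu$) and, being weight-preserving, maps $\mathcal{R}_2^{\rm ev}({\FF}_3)$ into the even part ${\FF}_3[A,B,C,D]$ of $I$ (as $E$ has odd degree and $E^2$ is a polynomial in $A,B,C,D$). From $\mu(\psi_2)=A$, from the identities $\chi_{10}=\nu(D)$, $\chi_{14}=\nu(B)\nu(D)=\nu(BD)$, $\chi_{36}=\nu(C)\nu(D)^3=\nu(CD^3)$, $\psi_{12}=\nu(S)$ with $S=B^3+A^3C-A^2B^2$, and from $\mu(\nu(T))=T$ for every invariant $T$ whose image $\nu(T)$ is a regular modular form (clear from the constructions of $\mu$ and $\nu$), one gets $\mu(\chi_{10})=D$, $\mu(\chi_{14})=BD$, $\mu(\chi_{36})=CD^3$, $\mu(\psi_{12})=S$, and hence $\mu(R^{\rm ev})=R':={\FF}_3[A,D,S,BD,CD^3]$. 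Since $\mu$ is an injective graded homomorphism, it identifies $R^{\rm ev}$ with $R'$; in particular $R'$ has the generating function $G$. Now for $h\in\mathcal{R}_2^{\rm ev}({\FF}_3)$ one has $\mu(h)\in{\FF}_3[A,B,C,D]$, and, by injectivity of $\mu$, the hypothesis $\psi_2 h\in R^{\rm ev}$ is equivalent to $A\,\mu(h)\in R'$ while the conclusion $h\in R^{\rm ev}$ is equivalent to $\mu(h)\in R'$. The saturation is thereby reduced to the purely algebraic assertion: if $p\in{\FF}_3[A,B,C,D]$ and $Ap\in R'$, then $p\in R'$ --- equivalently, $R'\cap A\,{\FF}_3[A,B,C,D]=A\,R'$.

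This algebraic assertion I would settle by comparing Hilbert series. Reduction modulo $A$ is a graded surjection $\pi\colon{\FF}_3[A,B,C,D]\to{\FF}_3[B,C,D]$ with kernel $A\,{\FF}_3[A,B,C,D]$; applying $\pi$ to the five generators shows $\pi(R')={\FF}_3[D,B^3,BD,CD^3]$, so $\pi$ induces a graded surjection $R'/A\,R'\twoheadrightarrow{\FF}_3[D,B^3,BD,CD^3]$ whose kernel is $\bigl(R'\cap A\,{\FF}_3[A,B,C,D]\bigr)/A\,R'$. It is enough to check that the two sides have the same Hilbert series, for then this surjection is an isomorphism in each degree and its kernel vanishes. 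On the left, $R'$ is a domain and $A$ has degree $2$, so $R'/A\,R'$ has Hilbert series $(1-t^2)G=\frac{1-t^{42}}{(1-t^{10})(1-t^{12})(1-t^{14})(1-t^{36})}$. On the right, ${\FF}_3[D,B^3,BD,CD^3]$ is a polynomial ring over ${\FF}_3[D,B^3,BD]$ in the single variable $CD^3$ (of degree $36$), because $C$ occurs only through $CD^3$; and ${\FF}_3[D,B^3,BD]\cong{\FF}_3[u,v,w]/(w^3-u^3v)$ with $u,v,w$ of degrees $10,12,14$ --- the surjection is obvious and injectivity follows by matching the reduced monomials $u^av^bw^c$ $(0\le c\le 2)$ with the distinct monomials $B^{3b+c}D^{a+c}$ in ${\FF}_3[B,D]$. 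Hence the right-hand Hilbert series is $\frac{1-t^{42}}{(1-t^{10})(1-t^{12})(1-t^{14})(1-t^{36})}$ too, and the two agree.

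The one step with real content is this last identification of ${\FF}_3[D,B^3,BD]$ with ${\FF}_3[u,v,w]/(w^3-u^3v)$ --- equivalently, the fact that the only relation among $D,B^3,BD,CD^3$ in ${\FF}_3[B,C,D]$ is $(BD)^3=(B^3)D^3$, which is precisely what pins down the Hilbert series of ${\FF}_3[D,B^3,BD,CD^3]$ as $(1-t^2)G$. Everything before it is formal; that monomial bookkeeping in ${\FF}_3[B,D]$ is where the actual computation lies.
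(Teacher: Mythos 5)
Your argument is correct and is in substance the paper's own proof: both reduce the inequality to the saturation statement that an element of $\mathcal{R}_2^{\rm ev}({\FF}_3)$ whose product with $\psi_2$ lies in $R^{\rm ev}$ already lies in $R^{\rm ev}$, transport this via $\mu$ to divisibility by $A$ in ${\FF}_3[A,B,C,D]$, and settle it by reducing modulo $A$ (where $S\equiv B^3$) and observing that the monomials $D^a B^{3b}(BD)^c(CD^3)^d$ with $c\le 2$ are distinct monomials in $B,C,D$. The paper runs this as a direct linear-independence argument on an explicit complement $N_{k+2}$ of $\psi_2 R_k$, whereas you package the identical computation as a Hilbert-series comparison for $R'/AR'$; the mathematical content is the same.
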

\begin{proof}
We can write $R_{k+2}=\psi_2 R_k \oplus N_{k+2}$ with $N_{k+2}$ the subspace
with basis the forms $\chi_{10}^a \psi_{12}^b \chi_{14}^c \chi_{36}^d$
with $a,b,c,d \geq 0$ and $c\leq 2$ in view of the relation (6).
Then we have $\dim N_{k+2}=r(k+2)-r(k)$. The inequality
$d(k+2)-d(k)\geq \dim N_{k+2}$ follows from the fact
that $N_{k+2}\cap \psi_2 M_k(\Gamma_2) =(0)$. To see this fact,
suppose that $f \in M_{k}(\Gamma_2)$ such that $f \not\in R_k$ and
$\psi_2 f \in R_{k+2}$. Then $\psi_2 f =P$ with $P$ a sum of monomials
$\chi_{10}^a \psi_{12}^b \chi_{14}^c \chi_{36}^d$ with $c \leq 2$.
Then $P=\nu(Q)$ with $Q$ a polynomial in
$$
D, B^3+A^3C-A^2B^2, BD, CD^3 \, .
$$ 
Since $P=\psi_2 f$ this 
polynomial must be divisible by $A$. But this implies that if $Q\neq 0$ then
it must have at least one monomial with $c\geq 3$, but we excluded this.
\end{proof}

The invariant $E$ of degree $15$ is of the form
$$
E= (a_1a_4^2-a_2^2a_5)^3 a_3^6+\cdots
$$
and $\nu(E)$ has order $-3$ along $H_1$. Therefore
$$
\chi_{35}:=\nu(E D^2)
$$
is a regular modular form. It vanishes on $H_1$ and on the Humbert surface $H_4$ of
discriminant $4$, both with multiplicity $1$. The surfaces $H_1$ and $H_4$ parametrize
abelian surfaces that possess an extra involution.
Locally near $H_4$ the extra automorphism
corresponds to the symmetry that interchanges $x_1$ and $x_2$.

We know that the cycle class of $2\, H_4$ on ${\A}_2^*\otimes {\FF}_3$
is $60 \lambda_1$, see \cite[Prop.\ 3.3, p.\ 217]{vdG-HMS}. 
Therefore the divisor of $\chi_{35}$ is $H_1+H_4$ and since the closure of $H_1$
contains the $1$-dimensional cusp  
$\chi_{35}$ is a cusp
form.
Then $\chi_{35}^2$ is of even weight, hence can be expressed as a 
polynomial in $\psi_2,\chi_{10},\psi_{12}, \chi_{14}$ and $\chi_{36}$. 
If $\psi$ is an odd weight
modular form then it must vanish on $H_1$ and $H_4$, hence it will be divisible by
$\chi_{35}$.

The relation between the space of binary sextics and the moduli space
$\overline{\mathcal{M}}_2$ (see for example \cite[Section 4]{CFG-MathComp})
implies  that a modular form $\chi$ is a cusp form if and only if
the invariant $\mu(\chi)$ 
is divisible by the discriminant $D$ in $I$. 
From the form of the generators one easily sees that $\chi_{10},\chi_{14}, \chi_{36}$ and
$\chi_{35}$  generate the 
ideal of cusp forms. This completes the proof.

\bigskip

\begin{remark}
One can use the knowledge of the dimensions of $M_k(\Gamma_2)$ to deduce non-vanishing
of $H^1(\tilde{\A}_2\otimes {\FF}_3, L^k)$ for certain values of $k$. 
The short exact sequence of sheaves on $\tilde{\A}_2\otimes {\FF}_3$
$$
0 \to L^k\otimes {\mathcal O}(-\overline{V}_1) \to L^k \to L^k_{|\overline{V}_1} \to 0
$$
gives rise to a long exact sequence 
which can be identified with
$$
0 \to M_{k-2}(\Gamma_2) \to M_k(\Gamma_2) \to H^0(\overline{V}_1,L^k) 
\to H^1(\tilde{\A}_2\otimes {\FF}_3,L^{k-2}) \to \cdots
$$ 
For example, if $\dim M_{k-2}(\Gamma_2)=\dim M_k(\Gamma_2)$ we get an injection
$ H^0(\overline{V}_1,L^k) \to H^1(\tilde{\A}_2\otimes {\FF}_3,L^{k-2})$ and if $k\equiv 0 (\bmod \, 4)$
and $k\geq 0$ 
one can show that $H^0(\overline{V}_1,L^k)\neq 0$ by showing that 
 $H^0(\overline{V}_1[2],L^k)^{\mathfrak{S}_6}\neq (0)$, the space of invariants
under the symmetric group $\mathfrak{S}_6$ acting on $H^0(\overline{V}_1[2],L^k)$ with
$V_1[2]$ the $3$-rank $\leq 1$ locus in the level $2$ moduli space $\tilde{\A}_2[2]$.
Thus for example, $H^1(\tilde{\A}_2\otimes {\FF}_3,L^{14})\neq (0)$.
\end{remark}
\end{section}
\section*{Acknowledgements}
The author thanks Fabien Cl\'ery for helpful correspondence.
He thanks YMSC at Tsinghua University where part of this work was done for hospitality. Finally thanks are due to the referee for his/her remarks.
%%%%%%%%%%%%%%%%%%%%%%%%%%%%%%%%%%%%%%%%%%%%%%%%%%%%%%%%%%%%%%%%%%%%

\end{document}